%
%
%

\documentclass[graybox]{svmult}


\usepackage{type1cm}        
%
\usepackage{makeidx}         
\usepackage{graphicx}        
\usepackage{multicol}        
\usepackage[bottom]{footmisc}

\usepackage{newtxtext}       %
\usepackage[varvw]{newtxmath}       

\usepackage{graphicx}       

\usepackage[utf8]{inputenc}
\usepackage[T1]{fontenc}
\usepackage{microtype} 

\usepackage{cite}
\usepackage{array,colortbl}

\DeclareFontFamily{U}{mathx}{\hyphenchar\font45}
\DeclareFontShape{U}{mathx}{m}{n}{
<5> <6> <7> <8> <9> <10>
<10.95> <12> <14.4> <17.28> <20.74> <24.88>
mathx10
}{}
\DeclareSymbolFont{mathx}{U}{mathx}{m}{n}
\DeclareFontSubstitution{U}{mathx}{m}{n}
\DeclareMathAccent{\widecheck}{0}{mathx}{"71}

%
\def\citep#1#2{\cite[{#1}]{#2}}


\newcommand{\RefEq}[1]{~\textup{(\ref{#1})}}

\newcommand{\RefSec}[1]{Section~\textup{\ref{#1}}}

\newcommand{\RefLem}[1]{Lemma~\textup{\ref{#1}}}

\newcommand{\RefAlg}[1]{Algorithm~\textup{\ref{#1}}}
\newcommand{\RefFig}[1]{Figure~\textup{\ref{#1}}}

\usepackage[linesnumbered,ruled,vlined]{algorithm2e}
\SetKwInput{KwInput}{Input}                
\SetKwInput{KwOutput}{Output}              

\usepackage{caption}
\usepackage{subcaption}

\newcommand{\Real}{{\mathbb{R}}}
\newcommand{\Integer}{{\mathbb{Z}}}
\newcommand{\bsX}{{\boldsymbol{X}}}
\newcommand{\bsY}{{\boldsymbol{Y}}}
\newcommand{\bsy}{{\boldsymbol{y}}}
\newcommand{\calF}{{\mathcal{F}}}
\newcommand{\calN}{{\mathcal{N}}}
\newcommand{\bseta}{{\boldsymbol{\eta}}}
\newcommand{\bstheta}{{\boldsymbol{\theta}}}
\newcommand{\calL}{{\mathcal{L}}}
\newcommand{\bbI}{{\mathbb{I}}}
\newcommand{\calU}{{\mathcal{U}}}


\makeindex             


\begin{document}

\title*{Multi-fidelity No-U-Turn Sampling}
\author{Kislaya Ravi, Tobias Neckel and Hans-Joachim Bungartz}
\institute{Kislaya Ravi \at Technical University of Munich, Garching, Germany. \email{kislaya@cit.tum.de}
\and Tobias Neckel \at Technical University of Munich, Garching, Germany. \email{neckel@cit.tum.de}
\and Hans-Joachim Bungartz \at Technical University of Munich, Garching, Germany. \email{bungartz@cit.tum.de}}
%
%

\newcommand{\dram}[1]{\textcolor{red}{#1}}

\maketitle

\abstract{
    Markov Chain Monte Carlo (MCMC) methods often take many iterations to converge for highly correlated or high-dimensional target density functions. Methods such as Hamiltonian Monte Carlo (HMC) or No-U-Turn Sampling (NUTS) use the first-order derivative of the density function to tackle the aforementioned issues. However, the calculation of the derivative represents a bottleneck for computationally expensive models. We propose to first build a multi-fidelity Gaussian Process (GP) surrogate. The building block of the multi-fidelity surrogate is a hierarchy of models of decreasing approximation error and increasing computational cost. Then the generated multi-fidelity surrogate is used to approximate the derivative.  The majority of the computation is assigned to the cheap models thereby reducing the overall computational cost. The derivative of the multi-fidelity method is used to explore the target density function and generate proposals. We select or reject the proposals using the Metropolis Hasting criterion using the highest fidelity model which ensures that the proposed method is ergodic with respect to the highest fidelity density function. We apply the proposed method to three test cases including some well-known benchmarks to compare it with existing methods and show that multi-fidelity No-U-turn sampling outperforms other methods. 
}

\section{Introduction}
Bayesian inference is a widely used method in many fields such as astrophysics, geological exploration, machine learning etc \cite{ravi-bib-kaipio}. Exact inference of the posterior is rarely possible. The Markov Chain Monte Carlo (MCMC) method is one of the most commonly used methods in Bayesian inference to draw samples from the posterior. MCMC draws samples from the target density function that converge to the given distribution and are asymptotically unbiased. Classical methods such as the Metropolis-Hastings algorithm \cite{ravi-bib-metropolis} and Gibbs sampling \cite{ravi-bib-gibbs} often take a very long time to converge because of the inefficient random walk. This makes it intractable for computationally expensive models.

Hamilton Monte Carlo (HMC) \cite{ravi-bib-neal-handbook} also known as hybrid Monte Carlo is a method that circumvents the random walk by using the derivative of the target density to propose better samples. There are two disadvantages of HMC. First, the user needs to specify some parameters of the algorithm manually. A poor choice of these parameters will have a considerable negative impact on the quality of drawn samples. Second, HMC requires the values of the derivative to be evaluated multiple times to propose one sample which means multiple evaluations of the target density function making it infeasible for computationally expensive models. The first issue can be addressed using No-U Turn sampling (NUTS) \cite{ravi-bib-nuts} which automatically determines the values of the crucial parameters. However, NUTS still requires access to the derivative.

In this work, we propose to replace the use of the actual model for derivative evaluation with a surrogate. The proposed sample is accepted/rejected based on the evaluation of the actual model. This ensures that the samples are invariant with respect to the target density function and not it's surrogate. Moreover, we also build the surrogate using multi-fidelity method.

One is often provided with different types of models solving the same phenomenon. We denote computationally cheap but inaccurate models as low-fidelity models whereas the expensive but accurate models are called high-fidelity models. We can combine these models efficiently for different applications by using the low-fidelity models more often than the high-fidelity function. Such methods fall under the category of multi-fidelity methods \cite{ravi-bib-survey-mf}. In this work, we use the concept of Gaussian Processes \cite{ravi-bib-rasmussen-book} to build multi-fidelity surrogates using a non-linear fusion of models \cite{ravi-bib-perdikaris-nargp, ravi-bib-lee-mfgp}. To the best of our knowledge, the algorithm proposed in this work is the first of its kind that applies multi-fidelity to NUTS. The algorithm is implemented in python and the implementation is publicly accessible \textsuperscript{1}. \footnotetext[1]{\url{https://github.com/KislayaRavi/MuDaFuGP}} This library can be easily coupled with solvers from various fields of application to generate multi-fidelity surrogates and solving computationally expensive problems such as forward uncertainty quantification, Bayesian optimization and MCMC sampling. In this work, we only discuss the MCMC sampling part of the implementation.

We first introduce the Bayesian inference setup in \RefSec{ravi-sec:bayesian-inverse}. Then, we provide a brief theoretical background for HMC and NUTS in \RefSec{ravi-sec:mcmc}. After that, we discuss the multi-fidelity method in \RefSec{ravi-sec:mf} where we explain the multi-fidelity Gaussian Process surrogate, delayed acceptance algorithm and finally describe the proposed algorithm by combining all the previous methods. Finally, we test the performance of the proposed method and compare it with some of the commonly used sampling methods in \RefSec{ravi-sec:results} and summarise our conclusions in \RefSec{ravi-sec:conclusion}.

\section{Bayesian Inference}
\label{ravi-sec:bayesian-inverse}
Let $\bsX \in \Real^d$ denote the parameter space and $\bsY \in \Real^m$ a separable Banach space that represents the data space. $d, m \in \Integer^+$ are the dimensions of parameters and data respectively. Both the parameters and the data belong to the finite-dimensional spaces which allow us to work with densities with respect to the Lebesque measure.
Let us consider a function $\calF : \bsX \rightarrow \bsY$, which is a map from the parameter space to the data space.

The noisy observations $\bsy \in \bsY$ are typically modeled by adding some Gaussian noise $\bseta \sim \calN(0, \Gamma)$, where $\Gamma$ is a positive definite covariance matrix. For a given parameter $\bstheta \in \bsX$,  we can express observed data $\bsy$ as a random variable: 
\begin{equation}
    \bsy = \calF(\bstheta) + \bseta
    \label{ravi-eq:basic-inverse}
\end{equation}
In inverse problems, the target is to solve \RefEq{ravi-eq:basic-inverse} w.r.t.~$\bstheta$, given the observations $\bsy$, to infer the true parameters $\bstheta^{true} \in \bsX$. However, this problem is ill-posed in the sense of Hadamard \cite{ravi-bib-hadamard}. One can cure the ill-posedness of the problem by reformulating it as a Bayesian inference problem \cite{ravi-bib-kaipio}. 

We consider the target $\bstheta$ as a random variable that follows a prior distribution $p(\bstheta)$ on $\bsX$ which represents our assumption on the parameters without looking at the observations. Let us assume that $\bstheta$ is independent of the noise $\bseta$. The likelihood $p(\bsy|\bstheta)$ represents the quality of an assumed $\bstheta$ to produce the given observations $\bsy$. Because of Gaussian noise $\bseta$, the likelihood can be written as:
\begin{equation}
    p(\bsy|\bstheta) \propto \exp \left( -\frac{1}{2} || \Gamma^{-1/2}(\bsy - \calF(\bstheta)) ||^2 \right)
    \label{ravi-eq:likelihood-def}
\end{equation}
In Bayesian Inference, we want to find the posterior distribution $p(\bstheta | y)$, which represents the probability distribution of the parameter given the observation. Using Bayes theorem, one obtains
\begin{equation}
    p(\bstheta|\bsy) = \frac{p(\bsy|\bstheta)p(\bstheta)}{p(\bsy)} \ .
    \label{ravi-eq:bayes}
\end{equation}

The denominator term in \RefEq{ravi-eq:bayes} is known as evidence which is often difficult to calculate. However, it is independent of $\bstheta$. There are multiple ways to draw samples from the posterior distribution. One of the commonly used methods is the Markov Chain Monte Carlo (MCMC) algorithm \cite{ravi-bib-neal-handbook} which also circumvents the evidence term. In this work, we deal with No-U Turns Sampling (NUTS) \cite{ravi-bib-nuts} which we explain in the next section. 

\section{Hamilton Monte Carlo and No-U-Turn Sampling}
\label{ravi-sec:mcmc}
No-U-Turn Sampling (NUTS) is a sampling method based on the Hamilton Monte Carlo (HMC) \cite{ravi-bib-neal-handbook} method. HMC utilizes the geometry of the density function to propose better samples. We introduce an extra momentum variable $r \in \Real^d$. The Hamiltonian of the system is defined using the state ($\theta$) and the momentum term ($r$). Let $\calL (\theta)$ be the logarithm of the target density function $\pi (\theta)$. With $r \cdot r$ representing the inner product of $\Real^d$, the Hamiltonian of the system is defined as

\begin{equation}
    H(\theta, r) = - \calL(\theta) + \frac{1}{2} r \cdot r \ .
    \label{ravi-eq:hamiltonian}
\end{equation} 
The canonical distribution of the system is
\begin{equation}
    p(\theta, r) \propto \exp \{ -H(\theta, r) \} \ .
    \label{ravi-eq:canonical-density}
\end{equation}
At each iteration, the momentum term is resampled ($r \sim \calN(0, \bbI_d)$) to propose the next state by solving the Hamiltonian dynamics for some time-steps. The Hamiltonian dynamics are numerically simulated using a time-stepping method which is reversible and volume-preserving. The leapfrog method is one of the most commonly used methods for this purpose and can be formulated as
\begin{equation}
    \begin{aligned}
        r^{i+1/2} &= r^i + (\epsilon/2) \nabla_{\theta} \calL (\theta^i) \\
        \theta^{i+1} &= \theta^i + \epsilon r^{i+1/2} \\
        r^{i+1} &= r^{i+1/2} + (\epsilon/2) \nabla_{\theta} \calL (\theta^{i+1}) \ ,
    \end{aligned}
    \label{ravi-eq:leapfrog-hmc}
\end{equation}
where $\epsilon$ is the time-step size for the leapfrog method.
We perform the fictitious time integration for given $T$ steps to obtain the proposal ($\tilde{\theta}, \tilde{r}$). The proposal is accepted/rejected based on the acceptance probability $\alpha$ defined as:
\begin{equation}
    \alpha = \min \left[ 1, \frac{\exp \{ -H(\tilde{\theta}, \tilde{r}) \}}{\exp \{ -H(\theta, r) \}}  \right]
    = \min \left[ 0, H(\theta, r) - H(\tilde{\theta}, \tilde{r}) \right]
    \label{ravi-eq:alpha-hmc}
\end{equation}
Note that the normalization term will get canceled out. So, we can sample from density functions that are not normalized. 
\begin{lemma}
    The HMC algorithm is ergodic with respect to the canonical density function mentioned in \RefEq{ravi-eq:canonical-density} provided the leapfrog integrator does not generate periodic proposals.
\end{lemma}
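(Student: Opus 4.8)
The plan is to establish ergodicity by verifying the two ingredients that guarantee convergence of a Markov chain to its stationary distribution: invariance of the target measure under the transition kernel, and irreducibility/aperiodicity on the state space. First I would argue that the canonical density $p(\theta, r) \propto \exp\{-H(\theta, r)\}$ is invariant under one step of the HMC transition. This rests on three structural facts about the proposal mechanism: (i) the momentum resampling $r \sim \calN(0, \bbI_d)$ leaves the marginal $\exp\{-\tfrac{1}{2} r \cdot r\}$ invariant by construction; (ii) the leapfrog integrator in \RefEq{ravi-eq:leapfrog-hmc} is volume-preserving (its Jacobian has determinant one, since it is a composition of shear maps) and time-reversible up to momentum flip; and (iii) the Metropolis--Hastings acceptance probability $\alpha$ in \RefEq{ravi-eq:alpha-hmc} is chosen precisely so that detailed balance holds with respect to $p(\theta, r)$. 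Combining (ii) and (iii) gives reversibility of the accept/reject step, and hence invariance of the canonical density.

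Next I would verify that the chain is $p$-irreducible and aperiodic, so that invariance upgrades to convergence. The key observation is that the Gaussian momentum resampling at each iteration assigns positive probability density to every direction in $\Real^d$; since the leapfrog map is a diffeomorphism depending continuously on $r$, the set of states reachable in a single step from any $\theta$ has positive Lebesgue measure and, by varying $r$, can be made to cover neighborhoods of essentially any point in $\bsX$. This yields irreducibility with respect to the Lebesgue measure and hence with respect to $p$. Aperiodicity follows from the fact that the accept/reject mechanism permits the chain to remain at its current state with positive probability (whenever a proposal is rejected), ruling out any forced cyclic behavior.

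The main obstacle is the hypothesis ``provided the leapfrog integrator does not generate periodic proposals.'' The leapfrog dynamics, being deterministic and volume-preserving, can in principle trace out a closed orbit for certain energy levels and step sizes $\epsilon$; on such an orbit the deterministic part of the proposal is periodic, and the chain could fail to explore the state space transversally to the orbit even though momentum is resampled. The delicate point is therefore to show that, generically, the union over all resampled momenta $r$ of the leapfrog trajectories after $T$ steps sweeps out a full-measure set, so that the stated non-periodicity assumption is exactly what excludes the degenerate resonant configurations and restores irreducibility. I would make this precise by treating the one-step kernel as a mixture over $r$ and showing its absolute continuity with respect to Lebesgue measure under the assumption, after which standard Markov chain convergence theory (Harris recurrence for invariant, irreducible, aperiodic kernels) delivers ergodicity.
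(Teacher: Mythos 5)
Your plan is correct and follows essentially the same route as the paper's own proof, which is only a two-sentence sketch deferring to Neal's handbook: detailed balance via the volume-preserving (and reversible) leapfrog map plus Metropolis acceptance, and the non-periodicity hypothesis invoked precisely to ensure the chain is not trapped in a proper subset (irreducibility). You simply flesh out, at the level of Harris recurrence and absolute continuity of the one-step kernel, the details the paper delegates to the reference.
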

\begin{proof}
    Interested readers can refer to \cite{ravi-bib-neal-handbook} for more detailed proof. If the leapfrog integrator does not generate periodic proposals then the Markov chain does not get trapped in certain subsets. We can also prove the detailed balance of the algorithm by taking into account the volume-preserving property of the leapfrog method. 
\end{proof}
The quality of the samples depends on the choice of the tuning parameters, namely the number of steps $T$ and the step size $\epsilon$. NUTS is an extension of HMC, where the need to specify a fixed value of $T$ is eliminated by performing the time integration until one observes a U-turn. After the U-turn, the time integrator is very likely to retrace the old steps or visit states that are very close to already explored states. The slice sampling method is used to sample from the canonical distribution $p(\theta, r)$. The sign of the momentum term does not affect the value of $p(\theta, r)$. So, we build a tree while running the time integration. The first step is to sample the momentum ($r \sim \calN(0, \bbI_d)$) as in standard HMC. We set the depth of the tree to zero ($l=0$). At every level, we perform the time integration for $2^l$ steps to obtain ($\tilde{\theta}, \tilde{r}$) and increase the value of $l$ by 1. At the end of time integration, we randomly draw either $1$ or $-1$ with equal probability. If the number is $1$, then we move to the rightmost node of the tree ($\theta^{r}$) and perform the next time integration steps using the corresponding momentum term. If the number is $-1$, then we move to the leftmost node of the tree ($\theta^{l}$) and perform the next time integration steps using the negative of the corresponding momentum term. At the end of every step, we check if we observe a U-turn. A U-turn is checked by verifying the following condition 
\begin{equation}
    (\theta^r - \theta^l) \cdot \tilde{r} < 0 \ .
    \label{ravi-eq:u-turn}
\end{equation}
We cut a random slice out of the canonical distribution $u \sim \calU[0, p(\theta, r)]$. 
Out of all the states explored while building the tree, we select the states that satisfy the condition $p(\tilde{\theta}, \tilde{r}) \geq u$.  We uniformly select one state out of all the chosen states which serves as the next state $\theta^{i+1}$. We repeat the aforementioned steps until the required number of samples are obtained. Interested readers can refer to \cite{ravi-bib-nuts} for the detailed algorithm. The samples drawn using NUTS are ergodic with respect to the canonical density function \cite{ravi-bib-nuts}.  

One of the main challenges of any gradient-based sampling method is the calculation of the gradient. In many cases, the gradients are not available but one can use the numerical approximation of the gradients. With the help of auto differentiation methods, the derivatives are easier to calculate. However, we need to compute the derivative at every point of the time integration step. This is very resource-intensive for computationally expensive functions. In this work, we suggest building a computationally cheap multi-fidelity surrogate of the function to approximate the derivative. In the next section, we explain the multi-fidelity method in more detail.

\section{Multi-fidelity Methods}
\label{ravi-sec:mf}
For different applications, we often have a hierarchy of models. These functions model the same phenomena but with different levels of accuracy and resource requirements. Low-fidelity functions are computationally inexpensive but have relatively high errors. In contrast, high-fidelity functions are computationally expensive but accurately model the phenomena. We arrange the functions in increasing order of fidelity  $\{ f_1, f_2, ..., f_L \}$, where $f_i:\Real^d \rightarrow \Real ,  \, i=1,2,...,L$. We need to evaluate the functions multiple times in applications that require uncertainty quantification, optimization, inverse problems, etc. Computing high-fidelity frequently is practically not possible because of resource limitations. Moreover, we cannot completely replace the high-fidelity function with the low-fidelity function because we need the final result to be reasonably accurate, too. Multi-fidelity methods combine different fidelity functions to leverage the speed of the low-fidelity function and the accuracy of the high-fidelity method. A detailed survey of the different multi-fidelity methods is given in \cite{ravi-bib-survey-mf}. In this work, we use Gaussian Processes (GPs) to create the multi-fidelity surrogate.

We discuss a method to use GP to build a multi-fidelity surrogate in \RefSec{ravi-subsec:mfgp}. The surrogate guides NUTS to propose samples. In \RefSec{ravi-subsec:da} we discuss the delayed acceptance method that ensures ergodicity of samples with respect to the high-fidelity function. Finally, in \RefSec{ravi-subsec:mfnuts} we combine all the different components to explain the multi-fidelity No-U Turn sampling method.
\vspace{-1.5em}
\subsection{Multi-fidelity Gaussian Process Surrogates}
\label{ravi-subsec:mfgp}
Let us consider a two-fidelity system where $f_l(\theta)$ and $f_h(\theta)$ represent the low and high-fidelity functions, respectively, that reside on the same parameter space with points $\theta \in \Real^d$. We perform a non-linear fusion of the different fidelities as discussed in \cite{ravi-bib-perdikaris-nargp,ravi-bib-lee-mfgp}. We first discuss non-linear auto-regressive Gaussian process (NARGP) \cite{ravi-bib-perdikaris-nargp} where the high-fidelity model is expressed as a function of the low-fidelity model and the parameter space as:
\begin{equation}
    f_h(\theta) = g(f_l(\theta), \theta) \ , 
    \label{ravi-eq:mfgp-nargp-ansatz}
\end{equation}
where $g$ is a function that resides in a $d+1$ dimensional space. In this way, one can explore the non-linear relationship between the high-fidelity and low-fidelity function. A Gaussian process (GP) \cite{ravi-bib-rasmussen-book} is used to express the function $g$ because it provides not just the prediction but also the confidence interval of the predicted value. The kernel of NARGP is expressed in a way to mimic the auto-regressive structure as discussed in \cite{ravi-bib-kennedy-ohagan-auto-reg} as:
\begin{equation}
    k_{NARGP} = k_f(f_l(\theta), f_l(\theta')|\lambda_{f}) k_{\rho}(\theta, \theta'|\lambda_{\rho}) + k_{\delta}(\theta, \theta'|\lambda_{\delta}) \ , 
    \label{ravi-eq:nargp-kernel}
\end{equation}
where $k_f, k_{\rho}, k_{\delta}$ are positive definite kernels and $\lambda_f, \lambda_{\rho}, \lambda_{\delta}$ are the corresponding hyperparameters. We use square exponential kernels in our work. However, one can choose a tailored kernel depending on the application case \cite{ravi-bib-rasmussen-book}. 

One can also include the derivative of the low-fidelity function in the formulation of the composite function $g$ in \RefEq{ravi-eq:mfgp-nargp-ansatz}. However, the derivative of the low-fidelity function may not be readily available. Instead of calculating the derivative using the finite difference method that can sometimes cause rounding errors, extra parameters are added in the composite function that corresponds to the lag term $f_l(\theta-\tau)$ and $f_l(\theta+\tau)$ which will mimic the derivative of the low-fidelity function, where $\tau$ represent a small real number. Now, the structure of the surrogate is:
\begin{equation}
    f_h(\theta) = g(f_l(\theta), f_l(\theta+\tau), f_l(\theta-\tau), \theta)
    \label{ravi-eq:mfgp-gpdf-ansatz}
\end{equation}
We can use any kernel of our choice to build the surrogate. It does not need to follow a structure like NARGP. This multi-fidelity Gaussian process surrogate is discussed in \cite{ravi-bib-lee-mfgp}. We call this method Gaussian Process with Derivative Fusion (GPDF). 

\subsection{Delayed acceptance}
\label{ravi-subsec:da}
The Delayed Acceptance (DA) algorithm \cite{ravi-bib-delayed-acceptance} was developed to sample from a distribution $\pi(\theta)$ when there exists an approximation $\pi^*(\theta)$ of the target density distribution. The distribution does not need to be normalized. Just like any Metropolis-Hastings algorithm, one needs a proposal density function $q(\tilde{\theta}|\theta)$ and an initial point $\theta^0$. The steps of the simplified version of DA are summarized below:
\begin{enumerate}
    \item Draw a sample from the proposal distribution $\tilde{\theta} \sim q(\tilde{\theta}|\theta)$ and accept/reject the proposal for the next step based on the following acceptance probability:
    \begin{equation}
        \alpha^*(\tilde{\theta} | \theta) = \min \left\{ 1, \frac{\pi^*(\tilde{\theta}) q(\theta|\tilde{\theta})}{\pi^*(\theta) q(\tilde{\theta}|\theta)} \right\}
        \label{ravi-eq:da1}
    \end{equation}
    \item If the sample is accepted in the previous step, then accept/reject the proposal $\tilde{\theta}$ based on the following acceptance probability:
    \begin{equation}
        \begin{aligned}
            q^*(\tilde{\theta}|\theta) &= \alpha^*(\tilde{\theta} | \theta)q(\tilde{\theta}|\theta) \\
            \alpha(\tilde{\theta} | \theta) &= \min \left\{ 1, \frac{q^*(\theta|\tilde{\theta)} \pi(\tilde{\theta})}{q^*(\tilde{\theta}|\theta) \pi(\theta)} \right\}
        \end{aligned}
        \label{ravi-eq:da2}
    \end{equation}
    \item The next sample is taken to be the same as the previous sample if the proposal is rejected at any of the previous two steps.
    \item Repeat the first three steps until the required number of samples are drawn. 
\end{enumerate}
\begin{lemma}
    The DA algorithm preserves the detailed balance with respect to the target density $\pi(\theta)$.
    \label{ravi-lemma-detailed-balance-da}
\end{lemma}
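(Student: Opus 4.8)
The plan is to recognize the DA procedure as an ordinary Metropolis--Hastings (MH) sampler whose \emph{effective} proposal kernel is $q^*$, and then to invoke the classical MH detailed-balance identity. First I would assemble the one-step transition density for a genuine move $\theta \to \tilde\theta$ with $\tilde\theta \neq \theta$. Such a move happens only if $\tilde\theta$ is drawn from $q(\tilde\theta|\theta)$, survives the first screening with probability $\alpha^*(\tilde\theta|\theta)$, and survives the second acceptance with probability $\alpha(\tilde\theta|\theta)$. Multiplying these stages and using the definition $q^*(\tilde\theta|\theta) = \alpha^*(\tilde\theta|\theta)\, q(\tilde\theta|\theta)$ from \RefEq{ravi-eq:da2}, the off-diagonal transition kernel becomes
\begin{equation}
    P(\theta, \tilde\theta) = q(\tilde\theta|\theta)\, \alpha^*(\tilde\theta|\theta)\, \alpha(\tilde\theta|\theta) = q^*(\tilde\theta|\theta)\, \alpha(\tilde\theta|\theta) \ .
\end{equation}
The purpose of this rewriting is that the second-stage probability $\alpha$ in \RefEq{ravi-eq:da2} is already in the MH form with proposal $q^*$ and target $\pi$, so $P$ is precisely the kernel of a single MH step driven by $q^*$.

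Second, I would apply the standard MH cancellation to $q^*$. Writing $A = \pi(\theta)\, q^*(\tilde\theta|\theta)$ and $B = \pi(\tilde\theta)\, q^*(\theta|\tilde\theta)$, the acceptance probability is $\alpha(\tilde\theta|\theta) = \min\{1, B/A\}$, so that
\begin{equation}
    \pi(\theta)\, P(\theta, \tilde\theta) = A \cdot \min\left\{ 1, \frac{B}{A} \right\} = \min\{A, B\} \ .
\end{equation}
This expression is symmetric under the exchange $\theta \leftrightarrow \tilde\theta$, which swaps $A$ and $B$; the identical computation with the roles reversed gives $\pi(\tilde\theta)\, P(\tilde\theta, \theta) = \min\{B, A\}$. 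Equality of the two right-hand sides is exactly the detailed-balance relation $\pi(\theta)\, P(\theta, \tilde\theta) = \pi(\tilde\theta)\, P(\tilde\theta, \theta)$, which is the claim.

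The one point that deserves care — and the main conceptual obstacle — is that $q^*$ is \emph{not} a normalized proposal density: because of the first-stage rejection, $\int q^*(\tilde\theta|\theta)\, d\tilde\theta < 1$ in general. I would stress that this is harmless. Detailed balance constrains only the off-diagonal transitions $\tilde\theta \neq \theta$, and the argument above treats $q^*$ purely as a non-negative kernel, so its normalization never enters; the leftover ``stay-in-place'' mass is absorbed into the diagonal of $P$, where detailed balance holds trivially. Consequently the nested $\min$ operations appearing in $\alpha^*$ and $\alpha$ collapse cleanly, and no additional hypotheses on the approximation $\pi^*$ or on the base proposal $q$ are required for the result to hold.
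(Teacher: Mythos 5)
Your proof is correct, and it takes a genuinely different route from the paper's. You recognize the two-stage procedure as a single Metropolis--Hastings step driven by the effective (sub-normalized) proposal $q^*(\tilde\theta|\theta) = \alpha^*(\tilde\theta|\theta)\,q(\tilde\theta|\theta)$, and then invoke the classical cancellation $A\min\{1,B/A\} = \min\{A,B\}$ with $A = \pi(\theta)q^*(\tilde\theta|\theta)$, $B = \pi(\tilde\theta)q^*(\theta|\tilde\theta)$, whose symmetry in $\theta \leftrightarrow \tilde\theta$ is the whole argument; your remark that the lack of normalization of $q^*$ is immaterial (the missing mass sits on the diagonal, where detailed balance is trivial) is exactly the point that makes this reframing legitimate, and it is the viewpoint of the original delayed-acceptance paper of Christen and Fox. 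The paper instead never packages the two stages into $q^*$ conceptually: it writes out the full flux $q(\tilde\theta|\theta)\,\alpha^*(\tilde\theta|\theta)\,\alpha(\tilde\theta|\theta)\,\pi(\theta)$ with both nested $\min$ expressions expanded and algebraically reduces it to the manifestly symmetric closed form
\begin{equation*}
  \min \left\{ \pi^*(\theta) q(\tilde{\theta}|\theta),\ \pi^*(\tilde{\theta}) q(\theta|\tilde{\theta}) \right\} \min \left\{ \frac{\pi(\theta)}{\pi^*(\theta)},\ \frac{\pi(\tilde{\theta})}{\pi^*(\tilde{\theta})} \right\} \ .
\end{equation*}
Your approach buys brevity and transparency, and it generalizes immediately (e.g.\ to further delayed stages, since each stage just redefines the effective proposal); the paper's brute-force expansion buys an explicit formula for the equilibrium flux, which separates the roles of the surrogate $\pi^*$ (first factor) and the correction ratio $\pi/\pi^*$ (second factor), at the cost of heavier and more error-prone algebra. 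Both proofs share the same implicit regularity assumptions (positivity of $\pi^*$ where needed), so neither is more general in hypotheses; the difference is purely one of organization.
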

\begin{proof}
    We need to show $q(\tilde{\theta}|\theta) \alpha^*(\tilde{\theta}|\theta) \alpha(\tilde{\theta}|\theta) \pi(\theta) = q(\theta|\tilde{\theta}) \alpha^*(\theta|\tilde{\theta}) \alpha(\theta|\tilde{\theta}) \pi(\tilde{\theta})$ to satisfy the detailed balance.
    The proposed sample is either accepted or rejected. Proving the detailed balance for rejected case ($\tilde{\theta} = \theta$) is trivial. Let us analyze the case when the sample is accepted. We look at the left-hand side of the target equation. 
    \begin{align*}
        & q(\tilde{\theta}|\theta) \alpha^*(\tilde{\theta}|\theta) \alpha(\tilde{\theta}|\theta) \pi(\theta) \\
        &=q(\tilde{\theta}|\theta) \min \left\{ 1, \frac{\pi^*(\tilde{\theta}) q(\theta|\tilde{\theta})}{\pi^*(\theta) q(\tilde{\theta}|\theta)} \right\} \min \left\{ 1, \frac{q(\theta|\tilde{\theta}) \min \left\{ 1, \frac{\pi^*(\theta) q(\tilde{\theta}|\theta)}{\pi^*(\tilde{\theta}) q(\theta|\tilde{\theta})} \right\} \pi(\tilde{\theta})}{q(\tilde{\theta}|\theta)\min \left\{ 1, \frac{\pi^*(\tilde{\theta}) q(\theta|\tilde{\theta})}{\pi^*(\theta) q(\tilde{\theta}|\theta)}  \right\} \pi(\theta)} \right\} \pi(\theta) \\
        &=q(\tilde{\theta}|\theta) \min \left\{ 1, \frac{\pi^*(\tilde{\theta}) q(\theta|\tilde{\theta})}{\pi^*(\theta) q(\tilde{\theta}|\theta)} \right\} \min \left\{ 1, \frac{q(\theta|\tilde{\theta}) \min \left\{ 1, \frac{\pi^*(\theta) q(\tilde{\theta}|\theta)}{\pi^*(\tilde{\theta}) q(\theta|\tilde{\theta})} \right\} \pi(\tilde{\theta}) \pi^*(\tilde{\theta}) }{q(\tilde{\theta}|\theta)\min \left\{ 1, \frac{\pi^*(\tilde{\theta}) q(\theta|\tilde{\theta})}{\pi^*(\theta) q(\tilde{\theta}|\theta)}  \right\} \pi(\theta) \pi^*(\tilde{\theta}) } \right\} \pi(\theta) \frac{\pi^*(\theta)}{\pi^*(\theta)} \\
        &= \min \left\{ \pi^*(\theta) q(\tilde{\theta}|\theta), \pi^*(\tilde{\theta}) q(\theta|\tilde{\theta}) \right\} \min \left\{ \frac{\pi(\theta)}{\pi^*(\theta)}, \frac{\min \left\{q(\theta|\tilde{\theta})\pi^*(\tilde{\theta}), q(\tilde{\theta}|\theta) \pi^*(\theta) \right\} \pi (\tilde{\theta})}{\min \left\{q(\tilde{\theta}|\theta) \pi^*(\theta), q(\theta|\tilde{\theta}) \pi^*(\theta) \right\} \pi^*(\tilde{\theta})} \right\} \\
        &= \min \left\{ \pi^*(\theta) q(\tilde{\theta}|\theta), \pi^*(\tilde{\theta}) q(\theta|\tilde{\theta}) \right\} \min \left\{ \frac{\pi(\theta)}{\pi^*(\theta)}, \frac{\pi(\tilde{\theta})}{\pi^*(\tilde{\theta})} \right\}
    \end{align*}
    We observe that the final expression is symmetric about $\theta$ and $\tilde{\theta}$. So, we will arrive at the same expression if we simplified $q(\theta|\tilde{\theta}) \alpha^*(\theta|\tilde{\theta}) \alpha(\theta|\tilde{\theta}) \pi(\tilde{\theta})$ which is the right-hand side of the detailed balance equation. 
    This completes the proof.
\end{proof}

\vspace{-1.5em}
\subsection{Multi-fidelity No-U Turn sampling}
\label{ravi-subsec:mfnuts}
The steps of the Multi-fidelity No-U Turn sampling (MFNUTS) are shown in \RefAlg{ravi-alg:mfnuts}. We can broadly divide the algorithm into two parts, namely the offline stage and the sampling stage. The offline stage includes building the surrogate and the sampling stage involves using the surrogate and the high-fidelity function to obtain samples. The first part of the algorithm is to build a Multi-fidelity Gaussian Process (MFGP) surrogate as described in \RefSec{ravi-subsec:mfgp}. We build the surrogate of the logarithm of the target density function and randomly sample some points within the design space to build all the surrogates using NARGP and GPDF. Then, we select the model with the least mean squared error with respect to some test points. The surrogate is represented by $\calL_s(\theta)$ and the corresponding canonical distribution as shown in \RefEq{ravi-eq:canonical-density} by $p_s(\theta, r)$.

\begin{algorithm}[!t]
    \KwInput{$\calF := \{f_1, f_2, ..., f_L\}; \pi_L(\theta); \theta^{\text{upper}}; \theta^{\text{lower}}; \theta^0; M^{\text{adapt}}; M^{\text{samples}}$}
    \KwOutput{$\{\theta^1, \theta^2, ..., \theta^{M^{\text{samples}}}\}$}

    \tcc{Build MFGP surrogate}
    $f_s(\theta) \leftarrow $ \texttt{BuildMFGP}$(\calF, \theta^{\text{upper}}, \theta^{\text{lower}})$ \\
    Define log likelihood for surrogate ($\calL_s(\theta)$) and corresponding density function ($\pi_s(\theta):=\exp \{\calL_s(\theta)\}$) \\
    Define the canonical distribution of surrogate $p_s(\theta, r) := \exp\{ -\calL_s(\theta) + \frac{1}{2} r \cdot r \}$, where $r$ is the momentum term\\
    
    \tcc{Perform Adaptation steps using the surrogate to find step size}
    $\epsilon \leftarrow $ \texttt{FindStepSize}$(\calL_s, \theta^0, M^{\text{adapt}})$

    \tcc{Sampling step}
    \For{$i = 1$ to $M^{\text{samples}}$}{
        \tcc{Propose one sample by running one iteration of NUTS using the surrogate}
        $(\tilde{\theta}, \tilde{r}) \leftarrow $ \texttt{NUTS} $(\theta^{i-1}, \epsilon)$ \\
        \tcc{Accept or reject the proposal using DA algorithm}
        \If{$\tilde{\theta} \neq \theta^{i-1}$}{
            $\alpha_{\text{MFNUTS}}(\tilde{\theta}|\theta) = \min \left\{ 1, \frac{\min \left\{ 1, \frac{p_s(\theta, r)}{p_s(\tilde{\theta}, \tilde{r})} \right\}\pi_L(\tilde{\theta})}{\min \left\{ 1, \frac{p_s(\tilde{\theta}, \tilde{r})}{p_s(\theta, r)} \right\}\pi(_L\theta)} \right\}$\\
            \If{$\alpha_{\text{MFNUTS}}(\tilde{\theta}|\theta) > \calU [0, 1]$}{
                $\theta^i = \tilde{\theta}$
            }
            \Else{
                $\theta^i = \theta^{i-1}$
            }
        }
        \Else{
            $\theta^i = \theta^{i-1}$
        }
    }
    \caption{Multi-fidelity No-U Turn Sampling(MFNUTS)}
    \label{ravi-alg:mfnuts}
\end{algorithm}

The second step is to determine the step size ($\epsilon$) of the leapfrog method. On the one hand, a big value of $\epsilon$ will cause the leapfrog method to visit a few states before a U-turn is observed. This corresponds to an improper exploration of states and may lead to many rejected samples. On the other hand, a very small value of $\epsilon$ results in the exploration of many nearby states before reaching the stopping criterion which is inefficient and leads to a too high acceptance ratio. So, we optimize the value of $\epsilon$ such that the expected value of the acceptance ratio reaches a target value ($\delta$). In this work we use $\delta=0.65$ following the argument from \cite{ravi-bib-neal-handbook,ravi-bib-optimal-tuning-mcmc}. To achieve this, the dual averaging technique as described in \cite{ravi-bib-nuts,ravi-bib-nesterov-dual-averaging} is used. Only the surrogate canonical distribution $p_s(\theta, r)$ is used to determine the step size. We are given a starting point $\theta_0$ and we randomly sample a momentum term ($r_0 \sim \calN(0, \bbI_d)$). We start with some assumed value of $\epsilon$, perform one step of the leapfrog method, and check if the acceptance probability as shown in \eqref{ravi-eq:alpha-hmc} is greater than 0.5. The value of $\epsilon$ is halved until the criterion is satisfied. This is considered as the starting value of the step size ($\epsilon_0$) for the dual averaging algorithm which we run for some predefined steps. We call the part of finding the optimal value of the step size ($\epsilon$) the adaptation step.

We start the actual sampling step after the adaptation step. In each sampling step, we run one step of NUTS as described in \RefSec{ravi-sec:mcmc} using the surrogate canonical density function $p_s(\theta, r)$. Let the proposed sample be $(\tilde{\theta}, \tilde{r})$. The sample is accepted or rejected using the DA algorithm as described in \RefSec{ravi-subsec:da}. Let us represent the density function corresponding to the highest fidelity model as $\pi_L(\theta)$. We know that the leapfrog method is time-reversible. So, the proposal distribution become symmetric $q((\tilde{\theta}, \tilde{r})|(\theta, r)) = q((\theta, r)|(\tilde{\theta}, \tilde{r}))$. After substituting the previous assumption in\RefEq{ravi-eq:da2}, we obtain the acceptance ratio of the MFNUTS algorithm:

\begin{equation}
    \alpha_{\text{MFNUTS}}(\tilde{\theta}|\theta) = \min \left\{ 1, \frac{\min \left\{ 1, \frac{p_s(\theta, r)}{p_s(\tilde{\theta}, \tilde{r})} \right\}\pi_L(\tilde{\theta})}{\min \left\{ 1, \frac{p_s(\tilde{\theta}, \tilde{r})}{p_s(\theta, r)} \right\}\pi_L(\theta)} \right\} \ .
\end{equation}

Using \RefLem{ravi-lemma-detailed-balance-da}, we can show that the samples generated conserve the detailed balance with respect to $\pi_L(\theta)$. If the leapfrog steps do not get stuck in periodic cycles, then the samples are ergodic with respect to the density function of the highest-fidelity model.

\section{Numerical Results}
\label{ravi-sec:results}
We are going to compare the MFNUTS algorithm with the Metropolis-Hastings algorithm, HMC, NUTS, and the Delayed Rejection Adaptive Metropolis (DRAM) \cite{ravi-bib-dram} algorithm. The implementation of MFNUTS is available in the Github repository\textsuperscript{2}. We use Paramonte  \cite{ravi-bib-paramonte} to run DRAM. \footnotetext[2]{\url{https://github.com/KislayaRavi/MuDaFuGP}}Tensorflow Probability \cite{ravi-bib-tfd} is used to run the other three algorithms. We use three cases to compare the different methods and draw 10,000 samples for each case with 2,000 steps of adaptive or burn-in steps. The first case is drawing samples from the logarithm of the Rosenbrock function. This case checks the ability of the method to draw samples from a non-linear density function where a naive algorithm can result in the rejection of a lot of samples. The second case is drawing samples from an 8-dimensional correlated Gaussian distribution to check the performance for higher dimensions. Finally, we test the methods for calculating the intensity of multiple source terms in a steady-state groundwater flow problem. The Multivariate Effective Sample Size (mESS, see \cite{ravi-bib-mess}) is used to compare the quality of samples drawn from each method. In many real-world applications, the evaluation of a surrogate is infinitely cheap as compared to the computation of the high-fidelity function. So, we can ignore the evaluation of the surrogate in calculating the computational cost. We compare mESS with respect to the number of high-fidelity evaluations. A higher value of mESS tantamounts to a better method. We also consider the high-fidelity function evaluations done in the offline and burn-in phases in the plots. To create the multi-fidelity surrogate, we start with some random points. We keep on adding points to the training set at the locations of highest variance until the mean squared error of the surrogate is of the order of $10^{-3}$.

\subsection{Rosenbrock function}
We first test MFNUTS on a well-known benchmark test case. We take a two-fidelity scenario where the likelihood of the high-fidelity term $\calL_2$ is the Rosenbrock function and the likelihood of the low-fidelity term $\calL_1$ is a slightly modified Rosenbrock function:
\begin{equation}
    \begin{aligned}
        \calL_1(\theta_1, \theta_2) &= - 12(\theta_2 - \theta_{1}^{2} -1)^2 + (\theta_1 -1)^2  \ , \\
        \calL_2(\theta_1, \theta_2) &= - 50(\theta_2 -\theta_{1}^{2})^2 + (\theta_1 - 1)^2 \ . 
    \end{aligned}  
    \label{ravi-eq:rosenbrock-likelihood}
\end{equation}

The unnormalized density functions ($\pi_1(\theta), \pi_2(\theta)$) are exponential of the corresponding likelihood function. The contour lines of the low- and high-fidelity density function is visualized in \RefFig{ravi-fig:rosenbrock-contour} and are considerably different. If one directly uses the low-fidelity function as a guide to drawing proposals for the high-fidelity function then it will lead to a lot of rejections. The transformation from the low-fidelity function to the high-fidelity function involves a translation and a non-linear shape modification. A linear function will not be sufficient to learn this transformation. Therefore, we use the non-linear transformation described in \RefSec{ravi-subsec:mfgp}. The surrogate is created using $50$ high-fidelity function evaluations and $200$ low-fidelity function evaluations. In this case, GPDF generates the smallest mean squared error. 

\begin{figure}
    \begin{subfigure}{.5\textwidth}
            \centering
            \includegraphics[width=\linewidth]{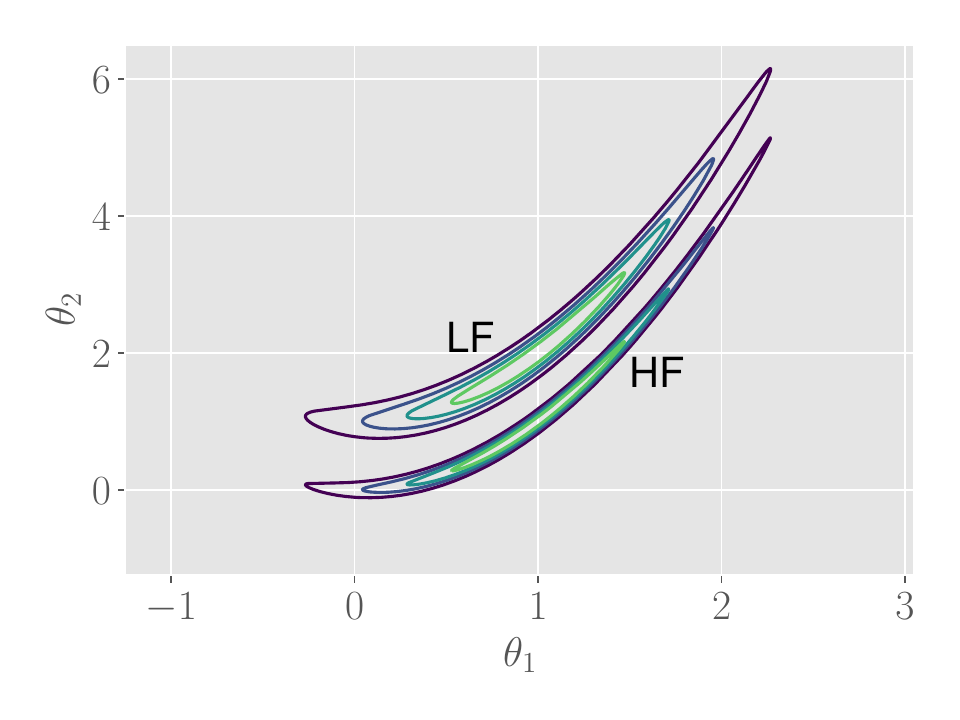}
            \caption{Contour of low and high-fidelity functions}
            \label{ravi-fig:rosenbrock-contour}
        \end{subfigure}
    \begin{subfigure}{.5\textwidth}
        \centering
        \includegraphics[width=\linewidth]{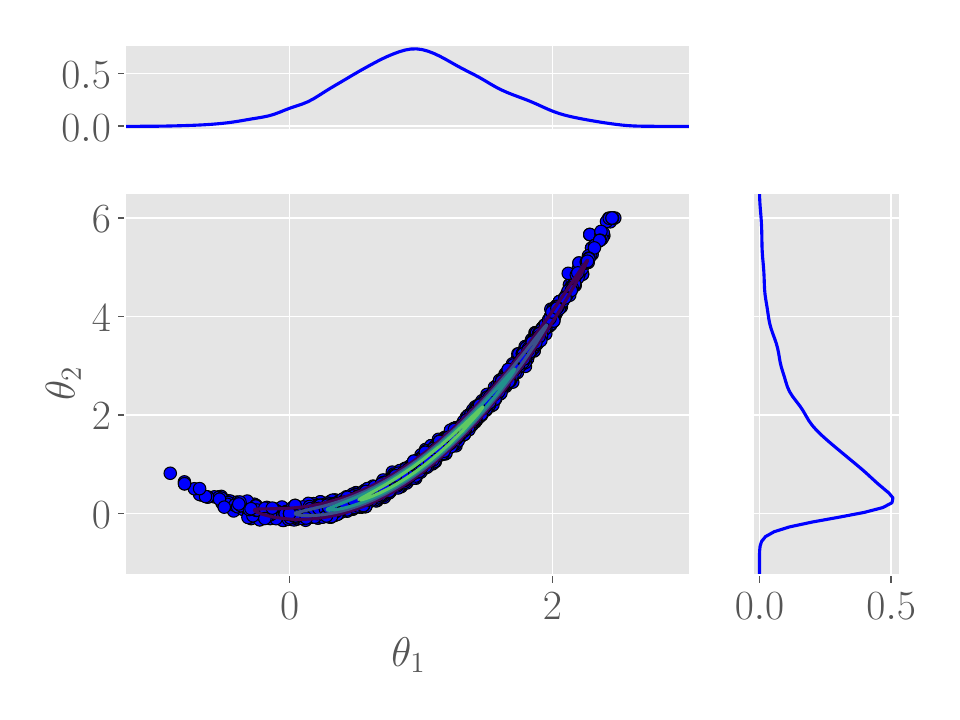}
        \caption{MFNUTS}
        \label{ravi-fig:samples-mfnuts-rosenbrock}
    \end{subfigure} 
    \newline
    \begin{subfigure}{.5\textwidth}
        \centering
        \includegraphics[width=\linewidth]{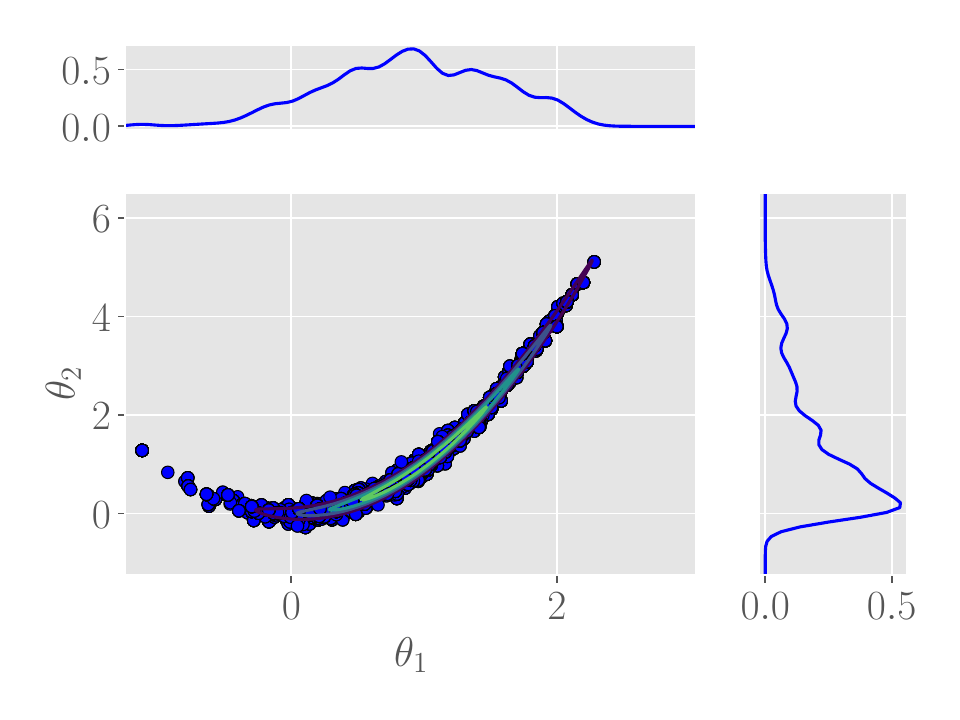}
        \caption{Metropolis-Hastings algorithm}
        \label{ravi-fig:samples-mh-rosenbrock}
    \end{subfigure}
    \begin{subfigure}{.5\textwidth}
        \centering
        \includegraphics[width=\linewidth]{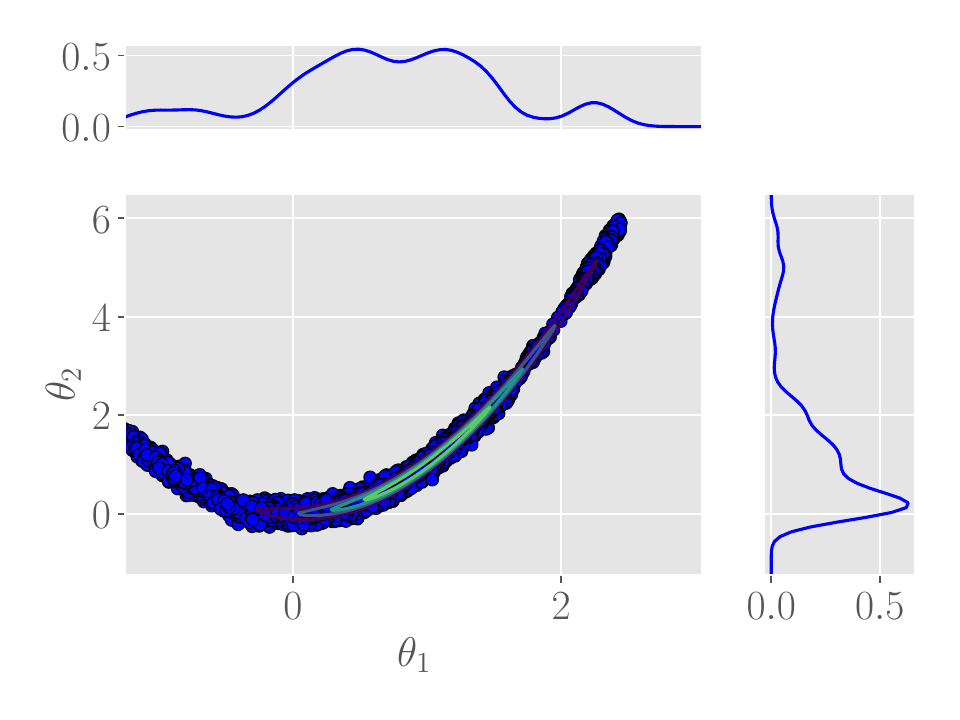}
        \caption{HMC}
        \label{ravi-fig:samples-hmc-rosenbrock}
    \end{subfigure} 
    \newline
    \begin{subfigure}{.5\textwidth}
        \centering
        \includegraphics[width=\linewidth]{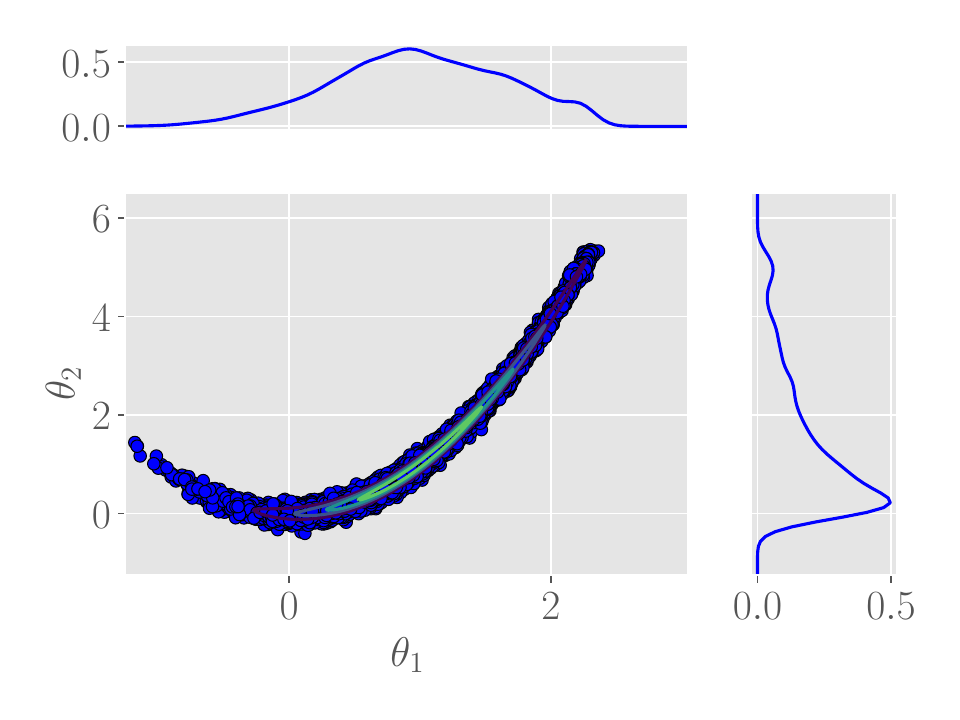}
        \caption{NUTS}
        \label{ravi-fig:samples-nuts-rosenbrock}
    \end{subfigure}
    \begin{subfigure}{.5\textwidth}
        \centering
        \includegraphics[width=\linewidth]{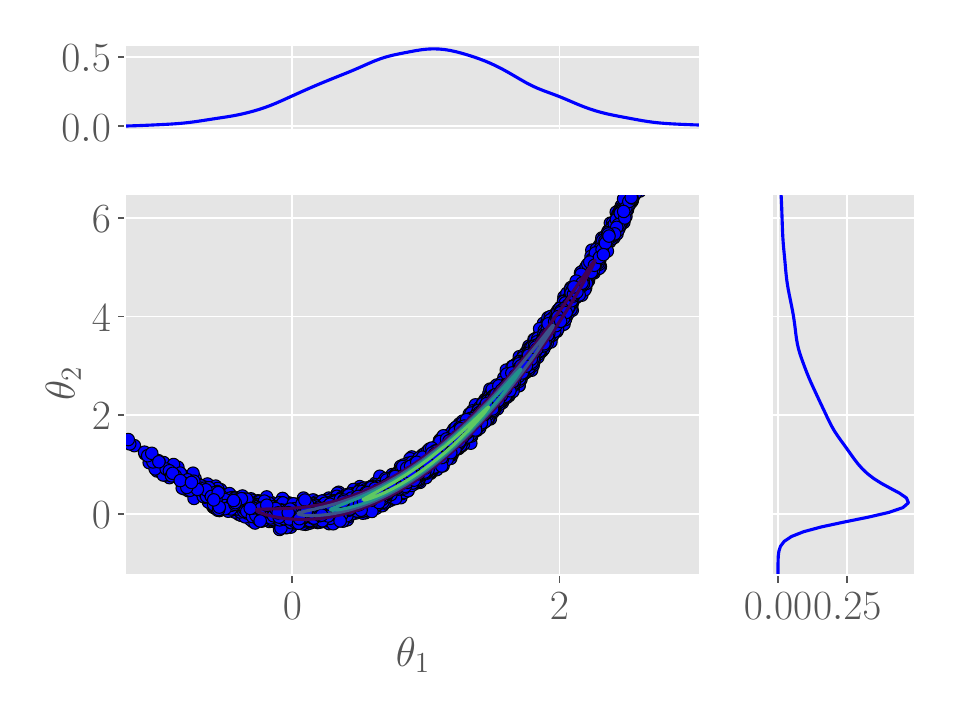}
        \caption{DRAM}
        \label{ravi-fig:samples-dram-rosenbrock}
    \end{subfigure}
    \caption{Contour of low and high-fidelity functions the samples drawn from Rosenbrock function using different algorithms}
    \label{ravi-fig:results-rosenbrock}
\end{figure}

\begin{figure}[!htp]
    \centering
    \includegraphics[width=0.75\linewidth]{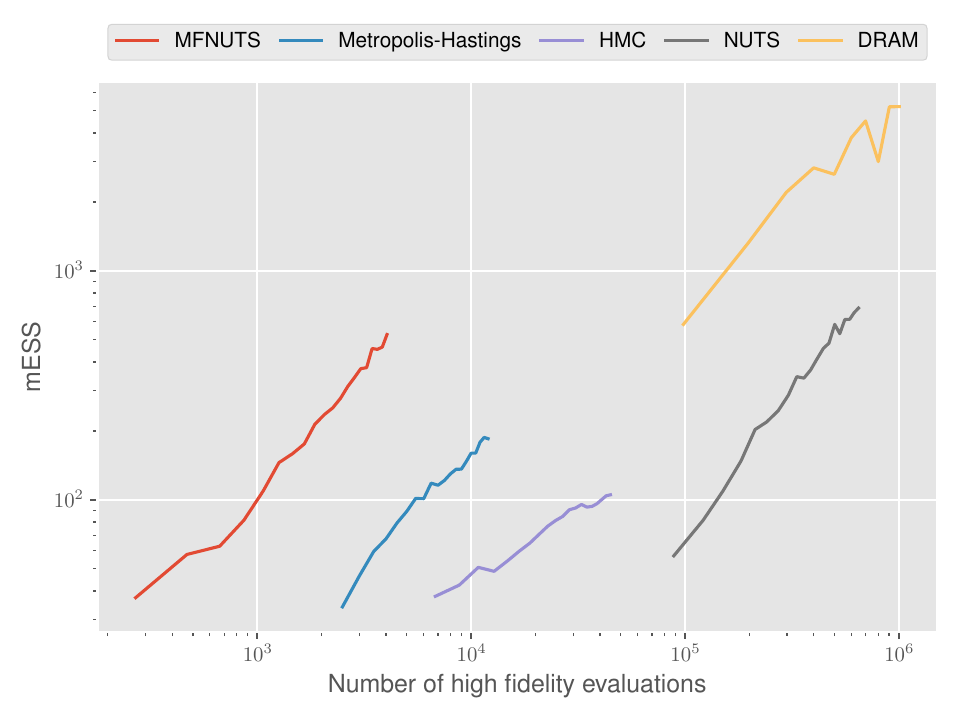}
    \caption{mESS over the number of high-fidelity evaluations for Rosenbrock function}
    \label{ravi-fig:mess-rosenbrock}
\end{figure}

\RefFig{ravi-fig:results-rosenbrock} shows that the samples drawn from all four methods represent the target density function. However, HMC and NUTS show a heavy tail as compared to the other methods because the tail regions are generally flat and a small value of the momentum term is not enough for the system to escape the region. MFNUTS has a relatively low number of samples in the tail region as compared to the other gradient-based methods because of the additional acceptance term ($\alpha_{\text{MFNUTS}}$) which mimics the acceptance ratio of the Metropolis-Hastings algorithm. 

We can observe from \RefFig{ravi-fig:mess-rosenbrock} that MFNUTS outperforms all other methods. The mESS of the samples generated by the Metropolis-Hasting algorithm is worse than the one of NUTS and MFNUTS. This is because the Gaussian density function which is used to propose samples is not representative enough to model the complicated banana-shaped target density. Thus, a lot of samples are rejected leading to a repetition of states which lower the value of mESS.  
HMC has the worst performance amongst all the methods because we used the default value of the number of time steps for the leapfrog integration as provided by Tensorflow Probability instead of manually tuning it. NUTS generates a higher mESS value than HMC. This example shows the importance of the automatic selection of the number of steps. However, NUTS evaluated the model more frequently than HMC. Moreover, a lot of high-fidelity evaluations was also done during the adaptivity steps.
DRAM has the highest mESS as compared to other methods but it requires a lot of high-fidelity function evaluations. Furthermore, DRAM needs extra function evaluations to learn the adaptive transition probability and propose another sample when the previously proposed sample is rejected. MFNUTS outperforms all methods by taking the good side of NUTS and replacing the high-fidelity function evaluation for derivative-evaluation with the surrogate. In most real-world scenarios, the cost of evaluating the surrogate is very small as compared to the one of the high-fidelity function. So, we circumvent the computationally expensive part by using the surrogate.

\subsection{8-d correlated Gaussian distribution}
In this section, we compare the performance of MFNUTS in a higher dimensional space. The low-fidelity function is a Gaussian distribution with zero mean and identity as the covariance matrix. The high-fidelity density function is also a Gaussian distribution with zero mean but now with a tridiagonal matrix as covariance. The log-likelihood of the low and high-fidelity density will be a sphere and an ellipsoid in 8-d space respectively. We use NARGP to learn the transformation between the log-likelihoods using $100$ high-fidelity and $500$ low-fidelity evaluations.  The evolution of mESS with respect to the number of high-fidelity evaluations is shown in \RefFig{ravi-fig:mess-8d-Gaussian}.
\begin{figure}[t]
    \centering
    \includegraphics[width=.75\linewidth]{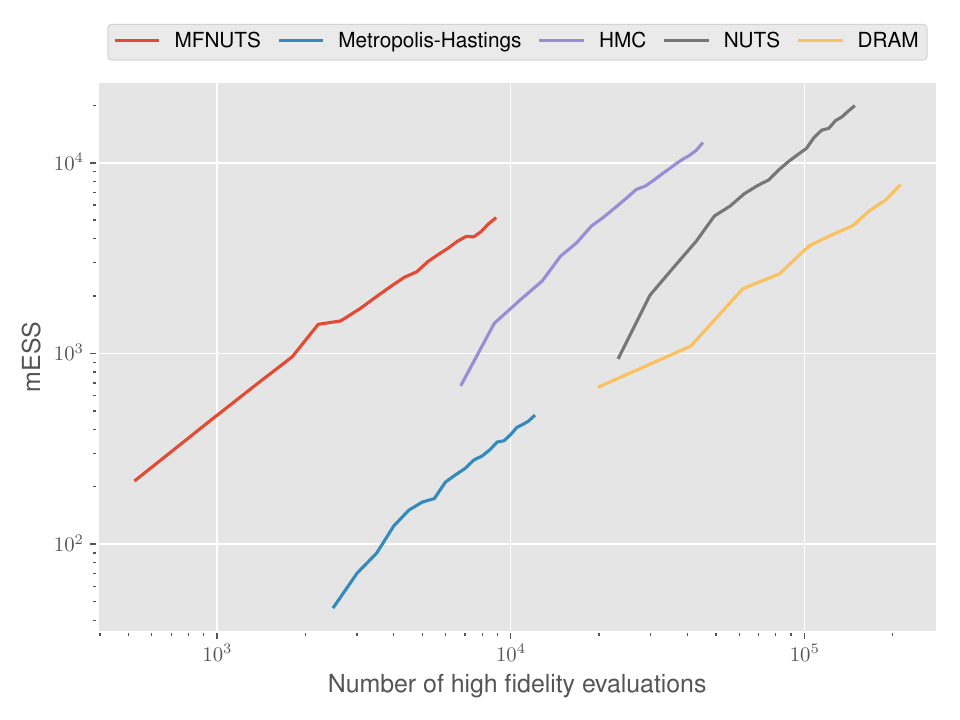}
    \caption{mESS over the number of high-fidelity evaluations for 8-d Gaussian test case.}
    \label{ravi-fig:mess-8d-Gaussian}
\end{figure} 
We again observe that MFNUTS outperforms the other methods. We also observe that the gradient-based algorithms (HMC and NUTS) and DRAM outperform the Metropolis-Hastings algorithm which is the general trend observed in high-dimensional sampling problems, since they provide a better proposal than the Metropolis-Hastings algorithm. We also observe that HMC has higher mESS for the same number of high-fidelity evaluations. We assume that the default value of the number of integration steps was suitable for the target distribution. As in the previous example, MFNUTS is better than other methods by delegating the computationally expensive part of NUTS to the surrogate.
\vspace{-1.5em}
\subsection{Steady-state groundwater flow}
Let us consider a two-dimensional steady-state groundwater flow problem with source terms. The governing equation is:
\begin{equation}
    \begin{aligned}
        \frac{\partial}{\partial X}\left( \kappa(X) \frac{\partial u}{\partial X} \right) = S(X) \quad \quad \quad  X \in \Omega
    \end{aligned}
    \label{ravi-eq:groundawter-flow-eq}
\end{equation} 
where, $\Omega:= [0,1]^2$ represents the spatial domain, $\kappa (X) $ represents the diffusion coefficient and $S(X)$ represents the source term. We consider zero Dirichlet boundary conditions.
For the given problem, we assume that the diffusion coefficient is constant $(\kappa (X) = 1)$ and the source term is the summation of $N \in \Integer$ Gaussian sources:
\begin{equation}
    S(X) = \sum_{i=1}^{N} S_i(X) = \sum_{i=1}^{N} \theta_i \calN(\mu_i, \sigma_i^2) \ ,
    \label{ravi-eq:source-term}
\end{equation}
where the $i^{\text{th}}$ Gaussian source is defined by its location $\mu_i$, variance $\sigma_i^2$ and intensity $\theta_i$. 
We consider a case with four sources as shown in \RefFig{ravi-fig:source-poisson}, at locations $\left[ (0.33, 0.33), (0.33, 0.67), (0.67, 0.33), (0.67, 0.67) \right]$ and each with variance $0.01$. We put nine probes marked by red dots in the \RefFig{ravi-fig:source-poisson}. Our goal is to infer the source intensities for some given measurements at the probes. We use the open-source finite element solver FEniCS \cite{ravi-bib-fenics} to solve the differential equation. Measurement data is generated by solving\RefEq{ravi-eq:groundawter-flow-eq} using a mesh size $64 \times 64$ with source intensity $\theta = \left[ 0.75, 1.25, 0.8, 1.2 \right]$ and adding Gaussian noise with variance $0.005$. 

\begin{figure}[t]
    \hspace{-.5cm}
    \begin{subfigure}{.55\textwidth}
        \centering
        \includegraphics[width=\linewidth]{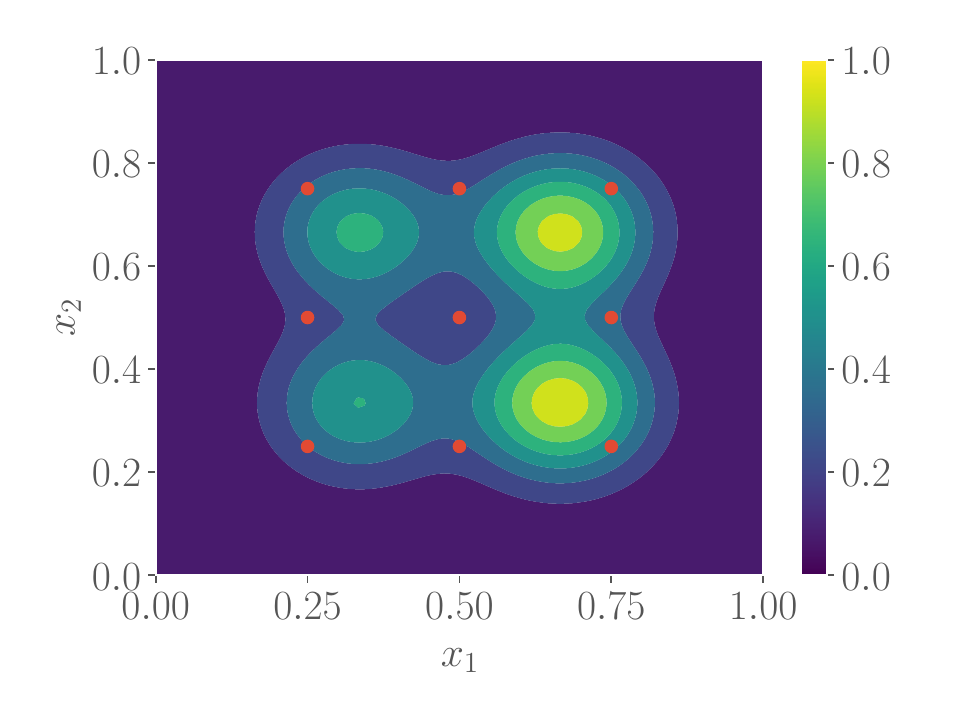}
        \caption{Source term and the probe location}
        \label{ravi-fig:source-poisson}
    \end{subfigure}
    \hspace{-.5cm}
    \begin{subfigure}{.55\textwidth}
        \centering
        \includegraphics[width=\linewidth]{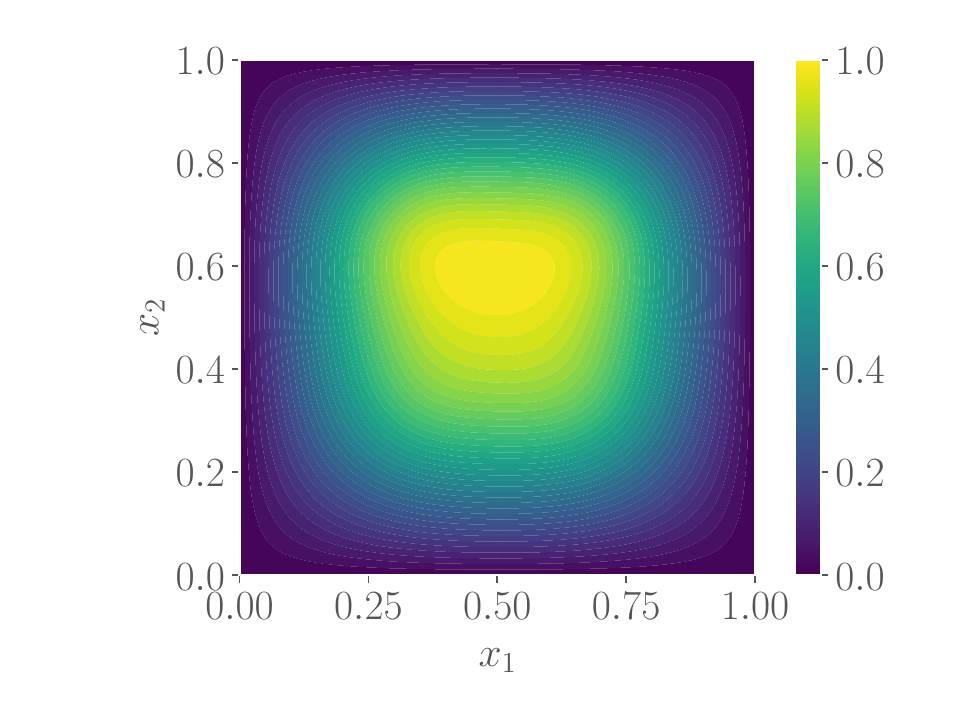}
        \caption{Solution of groundwater flow equation}
        \label{ravi-fig:solution-poisson}
    \end{subfigure} 
    \label{ravi-fig:poisson-setup}
    \caption{Setup of the groundwater flow test case with $\theta = \left[ 0.75, 1.25, 0.8, 1.2 \right]$}
\end{figure}

\begin{figure}
    \centering
    \includegraphics[width=.75\linewidth]{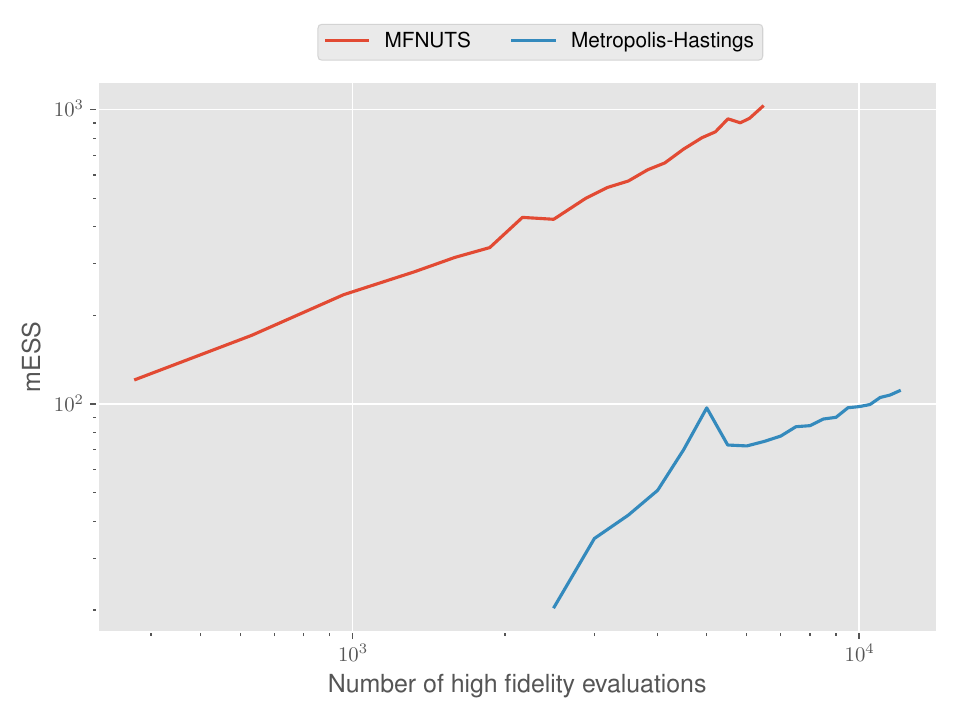}
    \caption{mESS over the number of high-fidelity evaluations for steady-state groundwater flow case.}
    \label{ravi-fig:mess-poisson}
\end{figure} 

\begin{figure}[t!]
    \centering
    \includegraphics[width=\linewidth]{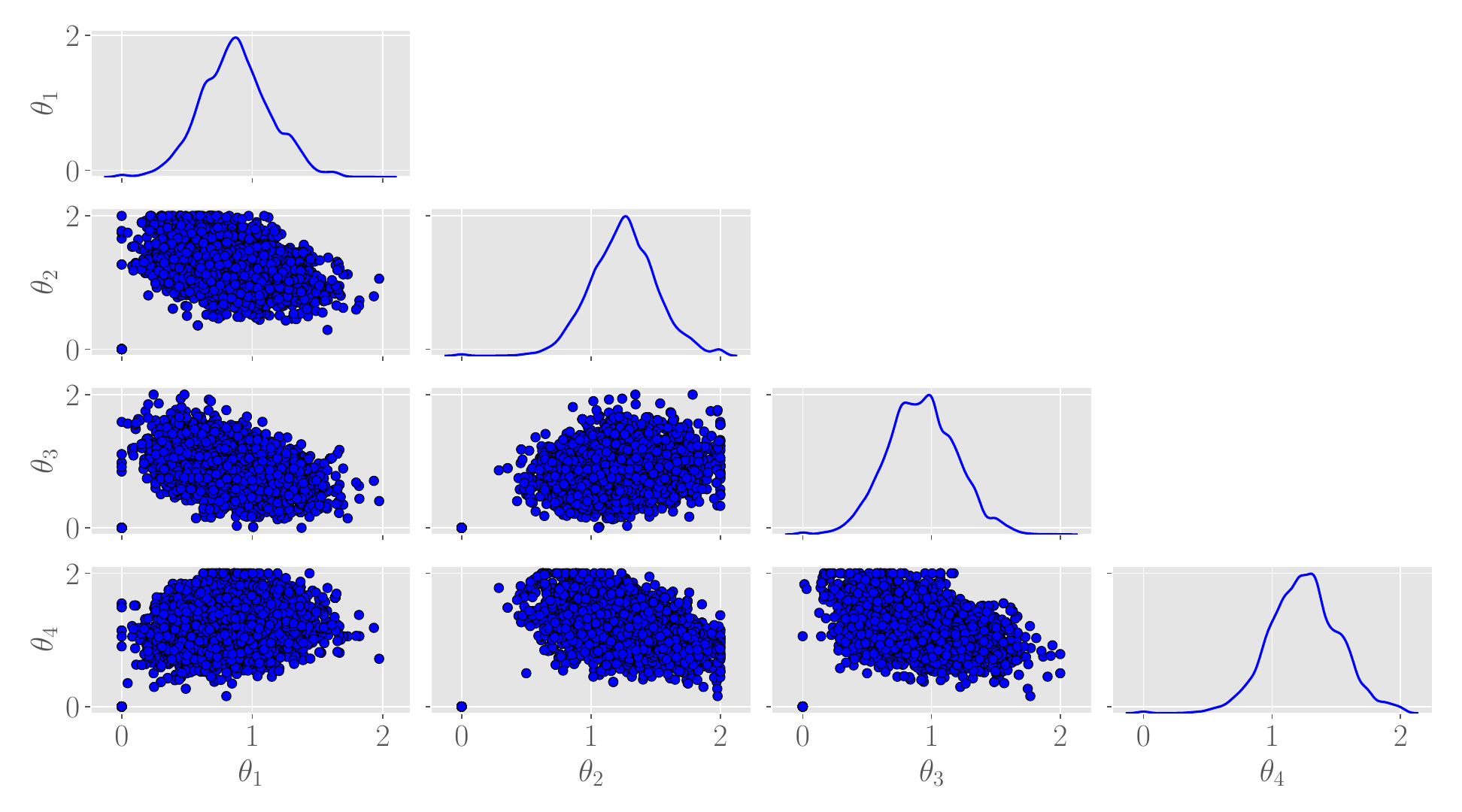}
    \caption{Pairwise plot of the samples of source intensities from MFNUTS for groundwater flow test case}
    \label{ravi-fig:pairwise-poisson-mfnuts}
\end{figure} 

We model the likelihood function as Gaussian with variance $0.005$ (approximately $1\%$ of the mean value of the range $u$) and the prior as a Gaussian with mean $1.0$ and variance $1.0$. Then, we multiply the likelihood with the prior to get the unnormalized posterior distribution which is the target density function. For this test case, we only compare MFNUTS with the Metropolis-Hasting algorithm. The solver does not have support for auto-differentiation. So, the calculation of the derivative can only be done using the finite difference method which is computationally very demanding. So, NUTS and HMC have a clear disadvantage in this test case. We consider two fidelities of solvers for the multi-fidelity sampler. The low and high-fidelity solvers have a mesh size of $8 \times 8$ and $64 \times 64$, respectively. We create separate multi-fidelity surrogates for observations at all the probe locations using NARGP. $70$ high-fidelity and $450$ low-fidelity function evaluations were used for generating the surrogate. Then, we use all the surrogates to compute the posterior which is used as the final multi-fidelity surrogate for the MFNUTS sampler.

We observe from \RefFig{ravi-fig:mess-poisson} that the MFNUTS results in a considerably higher mESS value than the Metropolis-Hastings algorithm for a similar number of high-fidelity function evaluations, as observed in the previous two test cases. We also plot the samples drawn from the MFNUTS in \RefFig{ravi-fig:pairwise-poisson-mfnuts}. The samples are mostly concentrated in  elliptical blobs. The mean of the samples is $[0.87, 1.25, 0.92, 1.25]$ which is close to the source intensity that we used to generate the data. 

\section{Conclusion and future work}
\label{ravi-sec:conclusion}
In this paper, we compare our proposed MFNUTS algorithm with existing single-fidelity sampling methods. In all three cases, MFNUTS outperforms the single-fidelity methods. This was achieved by taking advantage of NUTS and delegating the computationally expensive part to the surrogate. 
The importance of having higher mESS values and a proper exploration of the domain becomes very important when we deal with computationally expensive models. Bad proposals will lead to a waste of computational resources and low mESS will cause a high mean squared error. Our method will be particularly useful in those cases. Moreover, our method also generates samples that are invariant with respect to the high-fidelity model.
However, the quality of the proposal depends upon the surrogate. If the surrogate itself has high error then the proposals will lead to a lot of rejections, thereby decreasing the effective sample size.

It is not essential to use the Gaussian process to build the surrogate. One can use any other method such as a neural network or a sparse grid approximation to build the surrogate. To further improve the algorithm, we can also add the Delayed Rejection \cite{ravi-bib-delayed-rejection} feature. 

\section*{Acknowledgement}

The present contribution is supported by the Helmholtz Association under the research school Munich School for Data Science - MUDS.

%
%

\end{document}